\documentclass{amsart}
\usepackage{amssymb}
\usepackage{amsthm}
\newtheorem{theor}{Theorem}[section] 
 \newtheorem{cor}{Corollary}[section]
\theoremstyle{definition} 

\theoremstyle{remark} \newtheorem{rem}{Remark}[section]
\newcommand{\pn}{\par\noindent} \newcommand{\pmn}{\par\medskip\noindent}

\begin{document}
\title{Dual quadrangles in the plane}
\author{Irina Busjatskaja \and Yury Kochetkov}
\date{}
\begin{abstract} We consider quadrangles of perimeter $2$ in the plane
with marked directed edge. To such quadrangle $Q$ a
two-dimensional plane $\Pi\in\mathbb{R}^4$ with orthonormal base
is corresponded. Orthogonal plane $\Pi^\bot$ defines a plane
quadrangle $Q^\circ$ of perimeter $2$ and with marked directed
edge. This quadrangle is defined uniquely (up to rotation and
symmetry). Quadrangles $Q$ and $Q^\circ$ will be called dual to
each other. The following properties of duality are proved: a)
duality preserves convexity, non convexity and self-intersection;
b) duality preserves the length of diagonals; c) the sum of
lengths of corresponding edges in $Q$ and $Q^\circ$ is $1$.
\end{abstract} \email{ibusjatskaja@hse.ru, yukochetkov@hse.ru}
\maketitle

\section{Introduction}
\pn We follow the work \cite{CN} (see also the bibliography
there). Let $Q$ be a quadrangle with perimeter $2$ and with marked
directed edge in plane $\mathbb{R}^2$. It means that we indicate
the first vertex and the direction of going around of $Q$.
\begin{rem} If perimeter of a quadrangle is not $2$, then we made a
dilation with some positive $\alpha$. \end{rem} \pn Let $Q=ABCD$
and $A$ be the first vertex. Vectors $\overline{AB}$,
$\overline{BC}$, $\overline{CD}$ and $\overline{DA}$ we will
consider as complex numbers $z_1$, $z_2$, $z_3$ and $z_4$,
respectively. Then
$$z_1+z_2+z_3+z_4=0,\quad\text{and}\quad |z_1|+|z_2|+|z_3|+|z_4|=2.$$
\begin{rem} The above complex description of $Q$ is invariant with
respect to a translation. \end{rem} \pn  In what follows we will
consider only \emph{non degenerate} quadrangles (with one
exception in Section 4), i.e. quadrangles with non-collinear
successive edges. \pmn Let's define complex numbers
$u_1,u_2,u_3,u_4$ in the following way: a) $u_k^2=z_i$, k=1,2,3,4;
b) $u_1$ we choose arbitrarily; c) the rotation from $u_k$ to
$u_{k+1}$, $k=1,2,3$ is in the same direction as the rotation from
$z_k$ to $z_{k+1}$. Let $u_k=a_k+i\,b_k$, $k=1,2,3,4$, then
$$\sum_k (a_k^2+b_k^2)=2\quad\text{and}\quad
\sum_k[a_k^2-b_k^2)+2i\,a_kb_k]=0,$$ i.e. $\bar
a=(a_1,a_2,a_3,a_4)$ and $\bar b=(b_1,b_2,b_3,b_4)$ are a pair of
orthonormal vectors in $\mathbb{R}^4$. Let $\Pi=\langle \bar
a,\bar b\rangle$ be the linear hull. The two-dimensional plane
$\Pi$ uniquely defines its orthogonal complement --- the
two-dimensional plane $\Pi^\bot$. An orthonormal base $(\bar
c,\bar d)$ of $\Pi^\bot$ in its turn defines a quadrangle
$Q^\circ$ of perimeter $2$, which will be called the quadrangle
\emph{dual} to the quadrangle $Q$. \pmn We will prove the
following properties of the quadrangle duality. \begin{itemize}
\item The dual quadrangle $Q^\circ$ is defined uniquely up to
rotation and reflection (Theorem 2.1). \item The change of the
first vertex and the direction of the going around of $Q$ does not
change the dual quadrangle $Q^\circ$ (Theorem 2.2). \item The
duality preserves: a) convexity (Corollary 5.1); b) non-convexity
(Theorem 5.1); c) self-intersection (Theorem 4.1). \item The sum
of lengths of corresponding edges (in the sense of Section 3) of
$Q$ and $Q^\circ$ is $1$ (Theorem 6.1). \item The lengths of the
corresponding diagonals of $Q$ and $Q^\circ$ are equal (Theorem
7.1). \item Parallelograms are self-dual (Theorem 8.1).
\end{itemize}

\section{General remarks}
\pn Our definition of the dual quadrangle $Q^\circ$ is not
strictly correct, because the base $(\bar c,\bar d)$ of $\Pi^\bot$
is not unique: it is defined up to a rotation and up to the order
of base vectors.

\begin{theor} The plane $\Pi^\bot$ uniquely defines the dual quadrangle
up to a rotation and up to a reflection. \end{theor}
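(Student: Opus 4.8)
The plan is to use the fact that the collection of all orthonormal bases of the two-dimensional plane $\Pi^\bot\subset\mathbb{R}^4$ is acted on simply transitively by the orthogonal group $O(2)$: if $(\bar c,\bar d)$ and $(\bar c\,',\bar d\,')$ are two orthonormal bases of $\Pi^\bot$, then
$$\bar c\,'=\alpha_{11}\bar c+\alpha_{12}\bar d,\qquad \bar d\,'=\alpha_{21}\bar c+\alpha_{22}\bar d$$
for some matrix $(\alpha_{ij})\in O(2)$. Recall that, by construction, the $k$-th edge of $Q^\circ$ is the complex number $w_k^2$ with $w_k=c_k+i\,d_k$. Since $O(2)$ is generated by the rotation subgroup $SO(2)$ together with the single reflection $(\bar c,\bar d)\mapsto(\bar d,\bar c)$ that swaps the two basis vectors, it is enough to follow the effect of these two kinds of basis change on the numbers $w_k$, hence on $Q^\circ$.

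First I would treat a rotation of the basis through an angle $\theta$, i.e. $c_k'=\cos\theta\,c_k+\sin\theta\,d_k$ and $d_k'=-\sin\theta\,c_k+\cos\theta\,d_k$. A direct computation gives $w_k'=c_k'+i\,d_k'=e^{-i\theta}w_k$, so the new $k$-th edge equals $(w_k')^2=e^{-2i\theta}w_k^2$. Thus all four edges of the resulting quadrangle are obtained from those of $Q^\circ$ by one and the same rotation (through the angle $-2\theta$); this is precisely a rotation of $Q^\circ$, leaving the marked vertex and the direction of traversal in place. As a consistency check, $\theta=\pi$ replaces the basis by $(-\bar c,-\bar d)$, hence $w_k$ by $-w_k$, and indeed leaves every edge $w_k^2$ unchanged.

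Next I would treat the swap $(\bar c,\bar d)\mapsto(\bar d,\bar c)$: here $w_k'=d_k+i\,c_k=i\,\overline{w_k}$, so the new $k$-th edge is $(w_k')^2=-\,\overline{w_k^{\,2}}$, i.e. each edge of $Q^\circ$ is replaced by its complex conjugate and then rotated by $\pi$. Complex conjugation of all edges is a reflection of the quadrangle, so this basis change turns $Q^\circ$ into a reflected (and rotated) copy of itself. Composing the swap with the rotations of the previous paragraph realizes every element of $O(2)$, so an arbitrary change of the orthonormal basis of $\Pi^\bot$ alters $Q^\circ$ only by a rotation, or by a reflection followed by a rotation, which is exactly the assertion. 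I do not expect a genuine obstacle here: the argument reduces to the two short complex computations above, and the only points needing a little care are the sign conventions relating the real coordinates $(c_k,d_k)$ to the complex numbers $w_k$, and the observation that the swap together with $SO(2)$ exhausts $O(2)$, so that the two cases considered are indeed exhaustive.
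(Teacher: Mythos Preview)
Your proof is correct and follows essentially the same approach as the paper: both reduce to checking the effect on $w_k=c_k+i\,d_k$ of (i) a rotation of the orthonormal basis of $\Pi^\bot$ and (ii) the swap $(\bar c,\bar d)\mapsto(\bar d,\bar c)$, obtaining in the first case a rotation of $Q^\circ$ by twice the angle and in the second case a reflection (the paper phrases the latter as reflection in the $OY$-axis, which is the same as your ``conjugate then rotate by $\pi$''). Your explicit remark that $SO(2)$ together with one swap generates all of $O(2)$ just makes precise what the paper leaves implicit.
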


\begin{proof} Let a base $(\bar e,\bar f)$ of $\Pi^\bot$ be obtained by
the rotation of $(\bar c,\bar d)$ on an angle $\alpha$. Thus,
$$e_k=c_k\cos(\alpha)-d_k\sin(\alpha),\quad f_k=c_k\sin(\alpha)+
d_k\cos(\alpha),\,k=1,2,3,4,$$ i.e.
$$e_k+i\,f_k=(c_k+i\,d_k)e^{i\alpha}\Rightarrow (e_k+i\,f_k)^2=
(c_k+i\,d_k)^2e^{2i\alpha},\,k=1,2,3,4.$$ Hence, the rotation of
base of $\Pi^\bot$ on angle $\alpha$ implies the rotation of
$Q^\circ$ on angle $2\alpha$. \pmn  Let us now consider the base
$(\bar d,\bar c)$, instead of the base $(\bar c,\bar d)$, then
$$\begin{array}{l}{\rm Re}\left((d_k+i\,c_k)^2\right)=
-{\rm Re}\left((c_k+i\,d_k)^2\right) \\ {\rm
Im}\left((d_k+i\,c_k)^2\right)={\rm Im}\left((c_k+i\,d_k)^2
\right)\end{array} \quad k=1,2,3,4,$$ i.e. this change of base
implies the reflection of $Q^\circ$ with respect to the axis $OY$.
\end{proof} \pn Let $ABCD$ be the quadrangle $Q$, where $A$ is the
first vertex and the order $ABCD$ defines the direction of going
around. Let $(\bar c,\bar d)$ be the base of $\Pi^\bot$ and $KLMN$
be vertices of $Q^\circ$ ($K$ is the first vertex and the order
$KLMN$ defines the direction of going around). \begin{theor} The
dual of quadrangle $Q$ does not depend on the choice of the first
vertex and on the direction of the going around.\end{theor}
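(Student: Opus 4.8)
The plan is to reduce the statement to the two elementary relabelings that generate all changes of a marked directed edge, and to see how each of them acts on the plane $\Pi$. Passing from $Q=ABCD$ to any other marked quadrangle on the same four vertices is a composition of (i) the cyclic shift $ABCD\mapsto BCDA$ and (ii) the reversal of direction $ABCD\mapsto ADCB$. Under (i) the sequence of edges $(z_1,z_2,z_3,z_4)$ becomes $(z_2,z_3,z_4,z_1)$, and under (ii) it becomes $(-z_4,-z_3,-z_2,-z_1)$. So it suffices to prove that neither operation changes $Q^\circ$ up to rotation, reflection and the induced relabeling of the vertices $KLMN$. Throughout I would use that, once the first square root is fixed, conditions (a) and (c) determine $u_2,u_3,u_4$ uniquely: if $\varphi_k\in(-\pi,\pi)\setminus\{0\}$ (non-degeneracy) is the angle of rotation from $z_k$ to $z_{k+1}$, then $u_{k+1}=u_k e^{i\varphi_k/2}$ for $k=1,2,3$, this being the square root of $z_{k+1}$ reached from $u_k$ in the same direction of rotation.

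For operation (i) I would exploit the freedom in choosing the first square root by taking the new $u_1$ to be the old $u_2$; the matching rule then forces the next two new square roots to be the old $u_3,u_4$, while the last one is forced only to be some square root of $z_1$, hence $\pm u_1$. Thus the new orthonormal pair is obtained from $(\bar a,\bar b)$ by a signed cyclic permutation $\sigma$ of the four coordinates, which is an orthogonal map, so the new plane is $\sigma\Pi$, the new orthogonal complement is $\sigma\Pi^\bot$, and $(\sigma\bar c,\sigma\bar d)$ is one of its orthonormal bases. Computing $Q^\circ$ from this base (permissible by Theorem 2.1), the edge vectors of the new dual are the squares of the cyclically permuted coordinates of $\bar c+i\,\bar d$; the stray $\pm$ disappears upon squaring, so the new dual is exactly $Q^\circ$ with its first vertex advanced one step along its direction of going around. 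For operation (ii), since negating an edge multiplies its square root by the fixed constant $i$, taking $i\,u_4$ as the new first square root forces the remaining new square roots to be $i\,u_3,i\,u_2,i\,u_1$ (only the three interior rotations are used, so no sign ambiguity arises); since $i\,u_k=-b_k+i\,a_k$, this amounts to applying the coordinate reversal $\rho$ (up to a harmless global sign) to $(\bar a,\bar b)$, so the new plane is $\rho\Pi$ and the new dual has the reversed edge sequence of $Q^\circ$, which is $Q^\circ$ traversed backwards and then rotated by $\pi$, hence congruent to $Q^\circ$. Composing the two operations then yields the theorem.

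The step I expect to be the main obstacle is the bookkeeping in operation (i): because condition (c) constrains only the three consecutive gaps $u_k\to u_{k+1}$ and not the wrap-around gap $u_4\to u_1$, the fourth new square root is not $u_1$ in general but $(-1)^N u_1$, where $2\pi N$ is the total turning of $Q$ (so genuinely $-u_1$ for a convex quadrangle, where $N=\pm 1$). The point that rescues the argument is that this sign is squared away when one returns from the orthonormal base of $\Pi^\bot$ to the edge vectors of $Q^\circ$, and I would make this explicit rather than try to determine the sign; it is also worth noting, by contrast, that operation (ii) involves no wrap-around and therefore produces no such sign. A final minor point to record is the elementary fact that a quadrangle with reversed edge sequence is congruent to the original, which is precisely the kind of ambiguity already allowed by the ``up to rotation and reflection'' clause of Theorem 2.1.
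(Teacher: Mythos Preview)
Your proposal is correct and follows essentially the same route as the paper: both reduce to the two generators (cyclic shift $ABCD\mapsto BCDA$ and reversal $ABCD\mapsto ADCB$), observe that each acts on $(\bar a,\bar b)$ through a signed coordinate permutation of $\mathbb{R}^4$, and then pass to $\Pi^\bot$ where the stray sign is squared away. Your bookkeeping is in fact tidier than the paper's---you explain why the $\pm u_1$ ambiguity in the cyclic case is genuine but harmless and why the reversal case carries no such ambiguity; the only cosmetic slip is the phrase ``up to a harmless global sign'' in the reversal step, where what actually occurs is $(\bar a',\bar b')=(-\rho\bar b,\rho\bar a)$, a rotation of the orthonormal frame inside $\rho\Pi$ rather than a sign change, but this leaves the plane (and hence the argument) untouched.
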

\begin{proof} Let us consider the going around of $Q=ABCD$ in
the same direction, but the first vertex be $B$, i.e. $Q=BCDA$.
Complex numbers $z_1,z_2,z_3,z_4$ are the same, but in order
$z_2,z_3,z_4,z_1$. Complex numbers $u_2,u_3,u_4$ are the same, but
the last one may be $u_1$ or $-u_1$. If the last number is $u_1$,
then $\Pi=\langle (a_2,a_3,a_4,a_1),(b_2,b_3,b_4,b_1)\rangle$  and
$\Pi^\bot=\langle (c_2,c_3,c_4,c_1),(d_2,d_3,d_4,d_1)\rangle$.
Thus, if $KLMN$ is the original dual quadrangle, then $LMNK$ is
the new one, but the same. If the last number is $-u_1$, then
$\Pi=\langle (a_2,a_3,a_4,-a_1),(b_2,b_3,b_4,-b_1)\rangle$ and
$\Pi^\bot=\langle (c_2,c_3,c_3,-c_1),(d_2,d_3,d_4,-d_1)\rangle$,
i.e. the result is the same because
$(-c_1-i\,d_1)^2=(c_1+i\,d_1)^2$. \pmn Now let us consider the
going around in the opposite direction, hence, $Q=ADCB$. In this
case we must consider complex numbers $-z_4,-z_3,-z_2,-z_1$ and
their square roots $\pm i\,u_4$, $\pm i\,u_3$, $\pm i\,u_2$, $\pm
i\,u_1$. Thus
$$\Pi=\langle \mp (a_4,a_3,a_2,a_1),\pm (b_4,b_3,b_2,b_1)\rangle$$ and
$$\Pi^\bot=\langle (c_4,c_3,c_2,c_1),(d_4,d_3,d_2,d_1)\rangle,$$ i.e.
$Q^\circ=KNML$. \end{proof}

\begin{rem} The rotation of $Q$ does not change the plane $\Pi$. \end{rem}

\begin{cor} Let $Q^\circ$ is dual to $Q$ and $(Q^\circ)^\circ$ is dual
to $Q^\circ$, then $Q=(Q^\circ)^\circ$ up to a rotation and up to
a reflection. \end{cor}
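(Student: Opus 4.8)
The plan is to run the recipe $Q\rightsquigarrow\Pi\rightsquigarrow\Pi^{\bot}\rightsquigarrow Q^{\circ}$ a second time and to check that it returns the edge vectors of $Q$. Write $Q=ABCD$ with edge vectors $z_1,\dots,z_4$, let $u_k=a_k+ib_k$ be the square roots chosen as in the Introduction, put $\Pi=\langle\bar a,\bar b\rangle$, and let $(\bar c,\bar d)$ be an orthonormal basis of $\Pi^{\bot}$, so that $Q^{\circ}$ has edge vectors $w_k=(c_k+id_k)^{2}$. To build $(Q^{\circ})^{\circ}$ one must pick square roots $v_k$ of the $w_k$ obeying the same direction condition of the construction; necessarily $v_k=\varepsilon_k(c_k+id_k)$ with $\varepsilon_k\in\{\pm1\}$, and the plane attached to $Q^{\circ}$ is then $\langle(\varepsilon_k c_k)_k,(\varepsilon_k d_k)_k\rangle$. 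Thus everything reduces to determining the sign pattern $(\varepsilon_1,\varepsilon_2,\varepsilon_3,\varepsilon_4)$.

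The first ingredient is the observation that the $4\times4$ matrix with rows $\bar a,\bar b,\bar c,\bar d$ is orthogonal --- these are four pairwise orthogonal unit vectors --- so its columns $(a_k,b_k,c_k,d_k)$ are orthonormal as well, whence
\[
a_ka_{k+1}+b_kb_{k+1}+c_kc_{k+1}+d_kd_{k+1}=0,\qquad k=1,2,3 .
\]
The second ingredient is non-degeneracy of $Q$: since $(u_{k+1}/u_k)^{2}=z_{k+1}/z_k$ and successive edges of $Q$ are non-collinear, the direction condition forces $\arg(u_{k+1}/u_k)\in(-\pi/2,\pi/2)\setminus\{0\}$, so $a_ka_{k+1}+b_kb_{k+1}=\operatorname{Re}(\overline{u_k}u_{k+1})>0$, and the identity above then gives $c_kc_{k+1}+d_kd_{k+1}<0$. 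Hence the planar vectors $(c_k,d_k)$, $(c_{k+1},d_{k+1})$ make an obtuse angle, and a short case check on the half-angle shows that in this situation the direction condition applied to $w_k,w_{k+1}$ forces $\varepsilon_{k+1}=-\varepsilon_k$. Therefore $\varepsilon_k=\varepsilon_1(-1)^{k-1}$, and the plane attached to $Q^{\circ}$ is $S\,\Pi^{\bot}$, where $S=\operatorname{diag}(1,-1,1,-1)\in O(4)$.

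To conclude, $\big(S\,\Pi^{\bot}\big)^{\bot}=S\,\Pi$ carries the orthonormal basis $\big(((-1)^{k-1}a_k)_k,((-1)^{k-1}b_k)_k\big)$; squaring coordinatewise yields the edge vectors $\big((-1)^{k-1}a_k+i(-1)^{k-1}b_k\big)^{2}=(a_k+ib_k)^{2}=z_k$, exactly those of $Q$. So $(Q^{\circ})^{\circ}$ has the same cyclic directed edge sequence as $Q$; since by Theorem~2.1 the dual is only defined up to rotation and reflection, this gives $(Q^{\circ})^{\circ}=Q$ up to rotation and reflection.

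I expect the crux to be the middle paragraph. The direction condition does \emph{not} return the naive square roots $c_k+id_k$ of the edges of $Q^{\circ}$, but their alternating-sign version $(-1)^{k-1}(c_k+id_k)$; one has to see both that this alternation is \emph{forced} --- which is where the column-orthogonality identity and the non-degeneracy of $Q$ are used together --- and that it is harmless, since the final coordinatewise squaring erases the factor $(-1)^{k-1}$. A minor extra point is that $Q^{\circ}$ must itself be non-degenerate for $(Q^{\circ})^{\circ}$ to make sense, which is guaranteed by the preservation properties already stated for self-intersection, convexity and non-convexity.
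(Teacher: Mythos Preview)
Your argument is correct, but it does considerably more than the paper, which states the corollary with no proof, as an immediate consequence of $(\Pi^{\bot})^{\bot}=\Pi$ together with Theorem~2.1. The subtlety you raise --- that the square roots of the edges $w_k=(c_k+id_k)^2$ selected by the direction rule are $\varepsilon_k(c_k+id_k)$ rather than $c_k+id_k$ themselves --- is genuine, but one does not need to pin down the sign pattern. Whatever $(\varepsilon_k)$ turns out to be, the plane attached to $Q^{\circ}$ is $D\,\Pi^{\bot}$ with $D=\operatorname{diag}(\varepsilon_1,\dots,\varepsilon_4)\in O(4)$; since $D$ is orthogonal, $(D\,\Pi^{\bot})^{\bot}=D\,\Pi$, which carries the orthonormal basis $(\varepsilon_k a_k)_k,(\varepsilon_k b_k)_k$, and squaring gives $(\varepsilon_k(a_k+ib_k))^2=z_k$ regardless of the signs. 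Thus your column-orthogonality computation and the determination $\varepsilon_k=(-1)^{k-1}$ can be dropped entirely. What your longer route buys is an explicit identification of the plane attached to $Q^{\circ}$ as $S\,\Pi^{\bot}$, which is pleasant extra information but not needed for the corollary.

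One caution: to guarantee that $Q^{\circ}$ is non-degenerate you invoke the preservation results of Sections~4--5, but the proof of Theorem~4.1 explicitly cites Corollary~2.1, so this appeal is circular. The paper simply leaves the point tacit (the corollary is only meaningful when $(Q^{\circ})^{\circ}$ is defined). If you want to be self-contained, note that your own inequality $c_kc_{k+1}+d_kd_{k+1}<0$ already excludes two of the three ways $w_k,w_{k+1}$ could be collinear, and the third can be handled from the column relations directly rather than via the later sections.
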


\section{The main construction}
\pn In this section, using the knowledge of lengths of edges and
angles of the quadrangle $Q$ we will construct the base of the
plane $\Pi^\bot$. \pmn Let $Q=ABCD$ be positioned in the following
way: $A$ is at the origin, $B$ is in the positive real axis, $C$
and $D$ are in the upper half-plane. Let $|AB|=s_1$, $|BC|=s_2$,
$|CD|=s_3$ and $|DA|=s_4$. 4-dimensional vectors $\bar
a=(a_1,a_2,a_3,a_4)$ and $\bar b=(0,b_2,b_3,b_4)$ will be
considered as quaternions $a$ and $b$. Let $v=(0,a_2,a_3,a_4)$,
$|v|^2=a_2^2+a_3^2+a_4^2=1-a_1^2=1-s_1$. We consider quaternion
products $g=a\cdot
v=(-a_2^2-a_3^2-a_4^2,a_1a_2,a_1a_3a_1a_4)=(s_1-1,
a_1a_2,a_1a_3,a_1a_4)$ and $h=b\cdot
v=(0,b_3a_4-b_4a_3,b_4a_2-b_2a_4, b_2a_3-b_3a_4)$. The
corresponding vectors $\bar g$ and $\bar h$  constitute an
orthogonal base of $\Pi^\bot$ (not orthonormal, because
$|g|=|h|=\sqrt{1-s_1}$). \pmn Let $Q$ be a convex quadrangle
\[\begin{picture}(90,75) \put(0,15){\vector(1,0){90}}
\put(87,7){\scriptsize x} \put(15,5){\vector(0,1){65}}
\put(7,65){\scriptsize y} \put(15,15){\line(1,2){15}}
\put(30,45){\line(2,1){30}} \put(60,60){\line(1,-3){15}}
\put(7,7){\scriptsize A} \put(73,5){\scriptsize B}
\put(65,58){\scriptsize C} \put(25,48){\scriptsize D}
\end{picture}\]
\begin{center}{\large Figure 1}\end{center} \pmn Then
$$z_1=s_1,\,z_2=s_2\exp(\pi-\beta_2),\,z_3=s_3\exp(2\pi-\beta_2-\beta_3),\,
z_4=s_4\exp(\pi+\beta_1),$$ where $\angle DAB=\beta_1$, $\angle
ABC=\beta_2$, $\angle BCD=\beta_3$ and $\angle CDA=\beta_4$. Thus,
$$\begin{array}{l}\bar a=[\sqrt{s_1},\sqrt{s_2}\sin(\gamma_2),
-\sqrt{s_3}\cos(\gamma_2+\gamma_3),-\sqrt{s_4}\sin(\gamma_1)],\\
\bar b=[0,\sqrt{s_2}\cos(\gamma_2),\sqrt{s_3}
\sin(\gamma_2+\gamma_3),\sqrt{s_4}\cos(\gamma_1)],\end{array}$$
where $\gamma_k=\beta_k/2$, $k=1,2,3,4$, and
$$\begin{array}{l}\bar g=[s_1-1,\sqrt{s_1s_2}\sin(\gamma_2),
-\sqrt{s_1s_3}\cos(\gamma_2+\gamma_3),-\sqrt{s_1s_4}\sin(\gamma_1)],\\
\bar h=[0,-\sqrt{s_3s_4}\cos(\gamma_4),\sqrt{s_2s_4}
\sin(\gamma_1+\gamma_2),-\sqrt{s_2s_3}\cos(\gamma_3)]
\end{array}\eqno(1)$$ \pn If our quadrangle $Q$ is non convex
\[\begin{picture}(100,95) \put(0,15){\vector(1,0){100}}
\put(15,5){\vector(0,1){80}} \put(98,7){\scriptsize x}
\put(7,79){\scriptsize y} \put(7,7){\scriptsize A}
\put(15,15){\line(4,5){60}} \put(75,15){\line(-1,2){15}}
\put(60,45){\line(1,3){15}} \put(74,7){\scriptsize B}
\put(64,44){\scriptsize C} \put(79,87){\scriptsize D}
\end{picture}\]
\begin{center}{\large Figure 2}\end{center} \pmn
then
$$z_1=s_1,\,z_2=s_2\exp(\pi-\beta_2),\,z_3=s_3\exp(\beta_3-\beta_2),\,
z_4=s_4\exp(180+\beta_1),$$ where $\angle DAB=\beta_1$, $\angle
ABC=\beta_2$, $\angle BCD=\beta_3$ and $\angle CDA=\beta_4$. And
$$\begin{array}{l}\bar a=[\sqrt{s_1},\sqrt{s_2}\sin(\gamma_2),
\sqrt{s_3}\cos(\gamma_3-\gamma_2),-\sqrt{s_4}\sin(\gamma_1)],\\
\bar
b=[0,\sqrt{s_2}\cos(\gamma_2),\sqrt{s_3}\sin(\gamma_3-\gamma_2),
\sqrt{s_4}\cos(\gamma_1)].\end{array}$$ Thus
$$\begin{array}{l} \bar g=[s_1-1,\sqrt{s_1s_2}\sin(\gamma_2),
\sqrt{s_1s_3}\cos(\gamma_3-\gamma_2),-\sqrt{s_1s_4}\sin(\gamma_1)],\\
\bar
h=[0,-\sqrt{s_3s_4}\cos(\gamma_4),\sqrt{s_2s_4}\sin(\gamma_1+\gamma_2),
\sqrt{s_2s_3}\cos(\gamma_3)].\end{array}\eqno(2)$$ \pn If at last
our quadrangle $Q$ is self-intersecting
\[\begin{picture}(100,95) \put(0,10){\vector(1,0){95}}
\put(93,5){\scriptsize x} \put(10,2){\vector(0,1){90}}
\put(3,88){\scriptsize y} \put(2,2){\scriptsize A}
\put(68,2){\scriptsize B} \put(10,10){\line(5,2){75}}
\put(70,10){\line(-3,4){45}} \put(25,70){\line(2,-1){60}}
\put(88,38){\scriptsize D} \put(17,68){\scriptsize
C}\end{picture}\]
\begin{center}{\large Figure 3}\end{center} \pmn
then
$$z_1=s_1,\,z_2=s_2\exp(\pi-\beta_2),\,z_3=s_3\exp(\beta_3-\beta_2),
z_4=s_4\exp(\beta_1-\pi),$$ where $\angle ABC=\beta_2$, $\angle
BCD=\beta_3$, $\angle CDA=\beta_4$, $\angle DAB=\beta_1$. And
$$\begin{array}{l} \bar a=[\sqrt{s_1},\sqrt{s_2}\sin(\gamma_2),
\sqrt{s_3}\cos(\gamma_3-\gamma_2),\sqrt{s_4}\sin(\gamma_1)],\\
\bar
b=[0,\sqrt{s_2}\cos(\gamma_2),\sqrt{s_3}\sin(\gamma_3-\gamma_2),
-\sqrt{s_4}\cos(\gamma_1)].\end{array}$$ Thus,
$$\begin{array}{l} \bar g=[s_1-1,\sqrt{s_1s_2}\sin(\gamma_2),
\sqrt{s_1s_3}\cos(\gamma_3-\gamma_2),\sqrt{s_1s_4}\sin(\gamma_1)],\\
\bar
h=[0,\sqrt{s_3s_4}\cos(\gamma_4),-\sqrt{s_2s_4}\sin(\gamma_1+\gamma_2),
\sqrt{s_2s_3}\cos(\gamma_3)].\end{array}\eqno(3)$$

\section{Self-intersecting quadrangles}
\pn \begin{theor} The quadrangle, dual to a self-intersecting quadrangle,
is also self-intersecting. \end{theor}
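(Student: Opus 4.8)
The plan is to read $Q^{\circ}$ off from the explicit description of Section~3 and then check, directly, that the resulting quadrangle is crossed. By Theorem~2.2 and the translation and rotation invariance of $\Pi$, neither the marking of $Q$ nor a rigid motion of $Q$ affects $Q^{\circ}$, and a crossed quadrangle can always be relabelled so as to be in the position of Figure~3; so I would assume $Q$ is as in that figure, whence $\Pi^{\bot}=\langle\bar g,\bar h\rangle$ with $\bar g,\bar h$ given by $(3)$. These are orthogonal of common length $\sqrt{1-s_1}$, so $(\bar c,\bar d)=(1-s_1)^{-1/2}(\bar g,\bar h)$ is an orthonormal base of $\Pi^{\bot}$, and the edges of $Q^{\circ}$ are the complex numbers
\[
w_k=\frac{(g_k+i\,h_k)^2}{1-s_1},\qquad k=1,2,3,4 ,
\]
with $w_1+w_2+w_3+w_4=0$. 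Using this one base is harmless, since self-intersection is invariant under the rotations and reflections of Theorem~2.1; also, once the asserted implication is established, Corollary~2.1 yields its converse for free, because $(Q^{\circ})^{\circ}=Q$.

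From $(3)$ one has $g_1=s_1-1$, $h_1=0$, hence $w_1=1-s_1>0$ (here $s_1<1$, because $|z_1|<|z_2|+|z_3|+|z_4|$ for every non-degenerate quadrangle); writing $Q^{\circ}=KLMN$ we may therefore place $K=0$, $L=1-s_1$, so that the edge $KL$ is the real segment $[0,1-s_1]$. A direct computation from $(3)$, using $\gamma_i=\beta_i/2\in(0,\pi/2)$, $\gamma_1+\gamma_2\in(0,\pi)$ and $|\gamma_3-\gamma_2|<\pi/2$, gives
\[
{\rm Im}\,w_2=\frac{2\sqrt{s_1s_2s_3s_4}}{1-s_1}\,\sin\gamma_2\cos\gamma_4>0,\qquad
{\rm Im}\,w_4=\frac{2\sqrt{s_1s_2s_3s_4}}{1-s_1}\,\sin\gamma_1\cos\gamma_3>0 ,
\]
and likewise ${\rm Im}\,w_3<0$, while $w_2,w_3,w_4$ are all non-real. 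Consequently $M=w_1+w_2$ lies strictly above the real axis, and $N=-w_4=w_1+w_2+w_3$ strictly below it.

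Now I would locate the crossing. Of the two pairs of opposite edges of $KLMN$, the pair $LM,\,NK$ cannot meet: apart from their endpoints $L$ and $K$ they lie in the open upper and lower half-planes respectively, and $K\ne L$. The remaining edge $MN$, having its endpoints strictly on opposite sides of the real axis, crosses it in a single point $P$. Hence $Q^{\circ}$ is self-intersecting if and only if $P$ lies in the open interval $(0,1-s_1)$, i.e.\ if and only if $K$ and $L$ lie on opposite sides of the line $MN$. Substituting $K-M=-(w_1+w_2)$ and $L-M=-w_2$, the latter condition reads
\[
{\rm Im}\!\big((w_1+w_2)\,\overline{w_3}\big)\cdot{\rm Im}\!\big(w_2\,\overline{w_3}\big)<0 ,
\]
which, upon using $w_1\in\mathbb{R}$ and inserting the entries of $(3)$, reduces after clearing positive common factors and expanding to a pair of trigonometric inequalities in the $s_i$ and the $\gamma_i$; the first is $s_4\cos\gamma_4\sin(\gamma_1+\gamma_2)>s_1\sin\gamma_2\cos(\gamma_3-\gamma_2)$, and the second (involving also $1-s_1$) is of comparable type. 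I would deduce both from the real and imaginary parts of the closure relation $z_1+z_2+z_3+z_4=0$, rewritten through the half-angles $\gamma_i$, together with the inequalities $s_i<1$ valid for any non-degenerate quadrangle.

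The hard part is exactly this last verification: the two inequalities are genuine — they fail for arbitrary $s_i,\gamma_i$ — so the closure relation must be used in an essential way to control the signs, and that is where the substance of the theorem lies. A subsidiary point is the borderline case in which $P$ coincides with an endpoint of $KL$ (equivalently, one of these inequalities becomes an equality): there $Q^{\circ}$ degenerates, and one should check that this is precisely the single exceptional configuration alluded to in the introduction.
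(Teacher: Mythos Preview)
Your reduction is sound as far as it goes: you correctly place $M$ above and $N$ below the real axis, correctly rule out a crossing of $LM$ with $NK$, and correctly reduce the question to whether the line $MN$ separates $K$ from $L$. But the proof stops precisely where it has to start. The two trigonometric inequalities you isolate are not verified; you yourself call this ``the hard part'' and only say you ``would deduce'' them from the closure relation. That deduction is the entire content of the theorem in your approach, and nothing in the proposal indicates how to carry it out. (Your first inequality, for instance, amounts to $s_4\cos\gamma_4\sin(\gamma_1+\gamma_2)>s_1\sin\gamma_2\cos(\gamma_3-\gamma_2)$; the closure relation in half-angles does constrain these quantities, but turning that into a clean sign argument is not obvious, and you give no mechanism.) As written, this is an outline with a gap at the crucial step, not a proof. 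Your closing remark also misreads the introduction: the ``one exception in Section~4'' refers to a degenerate \emph{dual} quadrangle that arises inside the paper's argument, not to a borderline configuration of $Q$ itself.

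The paper avoids your head-on computation by a continuity argument. After showing, as you do, that $M$ and $N$ lie on opposite sides of $KL$ (so $Q^\circ$ is either non-convex or self-intersecting), it exhibits one explicit self-intersecting $Q_0$ whose dual is visibly self-intersecting, and supposes for contradiction that some self-intersecting $Q_1$ has a merely non-convex dual. Connecting $Q_0$ to $Q_1$ by a continuous family $Q_t$ of non-degenerate self-intersecting quadrangles, the dual family $Q_t^\circ$ must pass through a degenerate quadrangle $Q_\alpha^\circ$ (three vertices collinear). A short computation with the vectors $\bar g_\alpha,\bar h_\alpha$ for this degenerate $Q_\alpha^\circ$, using the law of sines in the triangle $KLM$, shows that $(Q_\alpha^\circ)^\circ$ is itself degenerate; but $(Q_\alpha^\circ)^\circ=Q_\alpha$ by Corollary~2.1, contradicting the non-degeneracy of the family $Q_t$. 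This sidesteps the delicate sign analysis entirely.
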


\begin{proof} Let $Q$ be a self-intersecting quadrangle $ABCD$
(see Figure 3) and $Q^\circ=KLMN$ --- its dual. As $z_2$ belongs
to the upper half-plane, then $a_2>0$ and $g_2>0$. As
$a_3b_4-a_4b_3<0$, because of the clockwise turn from $z_3$ to
$z_4$, then $h_2>0$ and $(g_2+i\,h_2)^2$ belongs to the upper
half-plane. Thus, $M$ also belongs to the upper half-plane ($K$ is
at origin and $L=(1-s_1,0)$). \pmn As the turn from $z_2$ to $z_3$
is clockwise, then $a_2b_3-a_3b_2<0$ and $h_4>0$. As $g_4>0$, then
$(g_4+i\,h_4)^2$ belongs to the upper half-plane. Thus, $N$
belongs to the lower half-plane (the direction of the vector
$\overline{NK}$ is up), i.e. $Q^\circ$ cannot be convex
--- $M$ and $N$ belong to different half-planes with respect to
$KL$. \pmn Now we will demonstrate that $Q^\circ$ is
self-intersecting. Let us consider the following highly symmetric
quadrangle $Q_0=ABCD$
\[\begin{picture}(90,90) \put(5,15){\vector(1,0){85}}
\put(15,5){\vector(0,1){83}} \put(88,7){\scriptsize x}
\put(9,84){\scriptsize y} \put(15,15){\line(1,1){60}}
\put(15,75){\line(1,0){60}} \put(15,75){\line(1,-1){60}}
\put(7,7){\scriptsize A} \put(73,7){\scriptsize B}
\put(7,73){\scriptsize C} \put(79,73){\scriptsize D}
\end{picture}\]
\begin{center}{\large Figure 4}\end{center} \pmn where all angles
$\beta_k$, $k=1,2,3,4$, are $\pi/3$ (see Figure 3). Its dual
$Q_0^\circ=KLMN$
\[\begin{picture}(120,115) \put(5,55){\vector(1,0){115}}
\put(15,30){\vector(0,1){75}} \put(118,47){\scriptsize x}
\put(8,100){\scriptsize y} \put(15,55){\line(2,-3){30}}
\put(45,10){\line(1,3){30}} \put(75,100){\line(2,-3){30}}
\put(8,47){\scriptsize K} \put(103,47){\scriptsize L}
\put(66,98){\scriptsize M} \put(50,7){\scriptsize N}
\end{picture}\]
\begin{center}{\large Figure 5}\end{center} \pmn is the same
quadrangle, rotated clockwise on $\pi/3$. Let us assume that there
exists a self-intersecting quadrangle $Q_1$ with non-convex dual
(see below)
\[\begin{picture}(50,50) \put(0,5){\line(1,2){20}}
\put(0,5){\line(1,1){10}} \put(10,15){\line(1,0){40}}
\put(20,45){\line(1,-1){30}} \end{picture}\]
\begin{center}{\large Figure 6}\end{center} \pmn Let $Q_t$,
$0\leqslant t\leqslant 1$, be a continuous family of
non-degenerate self-intersecting quadrangles, that connects $Q_0$
with $Q_1$. We construct this family by moving vertices $B$, $C$
and $D$. Then the continuous family $Q_t^\circ$ connects
$Q_0^\circ$ with $Q_1^\circ$. Hence, for some $\alpha$,
$0<\alpha<1$, the dual quadrangle $Q^\circ_\alpha$ must be
degenerate:
\[\begin{picture}(200,80) \put(10,40){\line(1,0){60}}
\put(10,10){\line(0,1){30}} \put(10,10){\line(1,2){30}}
\put(40,70){\line(1,-1){30}} \put(2,38){\scriptsize K}
\put(15,8){\scriptsize N} \put(73,38){\scriptsize L}
\put(45,68){\scriptsize M} \put(35,5){\small $Q^\circ_t$}

\put(120,40){\line(1,0){60}} \put(110,20){\line(1,2){25}}
\put(135,70){\line(3,-2){45}} \put(112,40){\scriptsize K}
\put(115,18){\scriptsize N} \put(183,38){\scriptsize L}
\put(140,70){\scriptsize M} \put(145,5){\small $Q^\circ_\alpha$}
\end{picture}\] \begin{center}{\large Figure 7}\end{center}
\pmn Now we will consider quadrangles $(Q_t^\circ)^\circ$ and take
the limit for $t\to\alpha$. Let us consider the quadrangle
$Q^\circ_t$ in the left part of Figure 7 and let (with some
abusing of the notation) $|KL|=s_1$, $|LM|=s_2$, $|MN|=s_3$,
$|NK|=s_4$, $\angle NKL=\beta_1$, $\angle KLM=\beta_2$, $\angle
LMN=\beta_3$, $\angle MNK=\beta_4$. Then
$$\begin{array}{l} \bar a_t=[\sqrt{s_1},\sqrt{s_2}\sin(\gamma_2),
-\sqrt{s_3}\cos(\gamma_2+\gamma_3),
\sqrt{s_4}\sin(\gamma_2+\gamma_3-\gamma_4)],\\
\bar
b_t=[0,\sqrt{s_2}\cos(\gamma_2),\sqrt{s_3}\sin(\gamma_2+\gamma_3),
\sqrt{s_4}\cos(\gamma_2+\gamma_3-\gamma_4)].\end{array}$$ When
$t\to\alpha$, then $Q_t^\circ$ on the left (Figure 7) is
transformed into $Q_\alpha^\circ$ on the right, with angles
$\angle MKL=\tilde\beta_1$, $\angle KLM=\tilde\beta_2$ and $\angle
LMK=\tilde\beta_3$. When $t\to\alpha$, then $\beta_4\to 0$,
$\beta_2\to\tilde\beta_2$, $\beta_3\to\tilde\beta_3$ and
$\beta_1\to \pi-\tilde\beta_1$. Hence, $$\begin{array}{l} \bar
a_\alpha=[\sqrt{s_1},\sqrt{s_2}\sin(\tilde\gamma_2),
-\sqrt{s_3}\sin(\tilde\gamma_1),\sqrt{s_4}\cos(\tilde\gamma_1)],\\
\bar
b_\alpha=[0,\sqrt{s_2}\cos(\tilde\gamma_2),\sqrt{s_3}\cos(\tilde\gamma_1),
\sqrt{s_4}\sin(\tilde\gamma_1)].\end{array}$$ Thus,
$$\begin{array}{l}\bar g_\alpha=[\sqrt{1-s_1},\sqrt{s_1s_2}\sin(\tilde\gamma_2),
-\sqrt{s_1s_3}\sin(\tilde\gamma_1),\sqrt{s_1s_4}\cos(\tilde\gamma_1)],\\
\bar
h_\alpha=[0,\sqrt{s_3s_4},-\sqrt{s_2s_4}\cos(\tilde\gamma_1+\tilde\gamma_2),
-\sqrt{s_2s_3}\sin(\tilde\gamma_1+\tilde\gamma_2)].\end{array}$$
The quadrangle, constructed with the use of vectors $\bar
g_\alpha$ and $\bar h_\alpha$, belongs to our family $Q_t$
(Corollary 2.1). Now let us consider complex numbers
$(g_\alpha)_3+i(h_\alpha)_3$, $(g_\alpha)_4+ i(h_\alpha)_4$ and
compute the product
\begin{multline*}\dfrac{(h_\alpha)_3}{(g_\alpha)_3}\cdot
\dfrac{(h_\alpha)_4}{(g_\alpha)_4}=\\=
\dfrac{\sqrt{s_2s_4}\cos(\tilde\gamma_1+\tilde\gamma_2)}
{\sqrt{s_1s_3}\sin(\tilde\gamma_1)}\cdot
-\dfrac{\sqrt{s_2s_3}\sin(\tilde\gamma_1+\tilde\gamma_2)}
{\sqrt{s_1s_4}\cos\tilde(\gamma_1)}=-\dfrac{s_2\sin(\tilde\beta_3)}
{s_1\sin(\tilde\beta_1)} =-1\end{multline*} (because in triangle
the ratio of an edge to the sine of the opposite angle is constant
and equal to the diameter of the circumscribed circle). As this
product is $-1$ then the corresponding vectors are orthogonal.
Thus the squaring of this complex numbers produces collinear
vectors. Hence, the quadrangle $(Q^\circ_\alpha)^\circ$ is
degenerate. But it cannot be so, because it belongs to our
non-degenerate family. \end{proof}

\section{Non-convex quadrangles}
\pn \begin{theor} If $Q$ is non-convex quadrangle, then its dual
is also non convex. \end{theor}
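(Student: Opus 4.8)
I would prove Theorem~5.1 by combining Theorem~4.1 with the involutivity of duality (Corollary~2.1) and a short connectedness argument, rather than by manipulating the formulas (2) directly. Let $\mathcal{Q}$ denote the set of non-degenerate quadrangles of perimeter $2$ with a marked directed edge, taken up to rotation and reflection. It is the disjoint union $\mathcal{Q}=\mathcal{Q}_c\sqcup\mathcal{Q}_n\sqcup\mathcal{Q}_s$ of the convex, the non-convex and the self-intersecting ones, and each of these three sets is open, since the type of a non-degenerate quadrangle does not change under a small perturbation. By Corollary~2.1 duality is a well-defined involution $\delta\colon\mathcal{Q}\to\mathcal{Q}$; moreover $\delta$ is continuous, because near any quadrangle the square roots $u_k$ of the (nonzero) edges $z_k$ and an orthonormal base $(\bar c,\bar d)$ of $\Pi^{\bot}$ can be chosen continuously, and the edges $(c_k+i\,d_k)^2$ of $Q^{\circ}$ depend continuously on them. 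Hence the type of $Q^{\circ}$ is a locally constant function of $Q$.

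The key step is to show $\delta(\mathcal{Q}_c)\subseteq\mathcal{Q}_c$. First, $\delta(\mathcal{Q}_c)$ does not meet $\mathcal{Q}_s$: if $Q$ were convex with $Q^{\circ}$ self-intersecting, then by Theorem~4.1 the quadrangle $(Q^{\circ})^{\circ}=Q$ would be self-intersecting, a contradiction. Thus $\delta(\mathcal{Q}_c)\subseteq\mathcal{Q}_c\sqcup\mathcal{Q}_n$, and being the continuous image of the connected set $\mathcal{Q}_c$ (every convex quadrangle of perimeter $2$ can be deformed through convex ones into a square), $\delta(\mathcal{Q}_c)$ is connected, so it lies entirely in one of the two disjoint open pieces. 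A single example decides which: for the square with edges $\tfrac12,\tfrac i2,-\tfrac12,-\tfrac i2$ the quaternion construction of Section~3 gives $\bar g=(-\tfrac12,\tfrac1{2\sqrt2},0,-\tfrac1{2\sqrt2})$ and $\bar h=(0,-\tfrac1{2\sqrt2},\tfrac12,-\tfrac1{2\sqrt2})$, so the edges $(g_k+i\,h_k)^2$ of $Q^{\circ}$ are $\tfrac14,-\tfrac i4,-\tfrac14,\tfrac i4$, again a square and in particular convex. Therefore $\delta(\mathcal{Q}_c)\subseteq\mathcal{Q}_c$.

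From here the argument is formal. Applying $\delta$ once more and using $\delta\circ\delta=\mathrm{id}$ turns this inclusion into $\delta(\mathcal{Q}_c)=\mathcal{Q}_c$; likewise Theorem~4.1 gives $\delta(\mathcal{Q}_s)\subseteq\mathcal{Q}_s$ and hence $\delta(\mathcal{Q}_s)=\mathcal{Q}_s$. Since $\delta$ is a bijection of $\mathcal{Q}$ fixing $\mathcal{Q}_c$ and $\mathcal{Q}_s$ setwise, it fixes the complement setwise as well: $\delta(\mathcal{Q}_n)=\mathcal{Q}\setminus\bigl(\delta(\mathcal{Q}_c)\cup\delta(\mathcal{Q}_s)\bigr)=\mathcal{Q}_n$. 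This is exactly the claim that the dual of a non-convex quadrangle is non-convex; and the equality $\delta(\mathcal{Q}_c)=\mathcal{Q}_c$ obtained along the way is Corollary~5.1.

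The only non-formal ingredients are the two topological facts — continuity of $\delta$ on $\mathcal{Q}$ (which rests on the edges of a non-degenerate quadrangle being nonzero, so the local choices above are available) and connectedness of $\mathcal{Q}_c$ — together with the one base-case computation; I expect making the continuity precise to be the main technical obstacle, since it is what guarantees that ``type of the dual'' really is locally constant. If one prefers to stay inside the paper's concrete framework, the alternative is to use Theorem~2.2 to place the reflex vertex of $Q$ at $C$, read $\bar g$ and $\bar h$ off (2), and verify that among the four turning angles of $Q^{\circ}$ (the arguments of the ratios of consecutive numbers $(g_k+i\,h_k)^2$) exactly one has the opposite sign from the other three, using $\gamma_1+\gamma_2+\gamma_3+\gamma_4=\pi$ with $\gamma_3>\pi/2$; there the sign of $\cos(\gamma_3-\gamma_2)$ requires a short case distinction, which is the fussiest point of that route.
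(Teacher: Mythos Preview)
Your proposal is correct but takes a genuinely different route from the paper. The paper argues directly with the explicit base (2): placing the non-convex $Q$ in the standard position of Figure~2, it checks the signs of $g_2,h_2,g_4,h_4$ to see that $(g_2+i\,h_2)^2$ and $(g_4+i\,h_4)^2$ both lie in the lower half-plane, hence the vertices $M$ and $N$ of $Q^\circ$ fall on opposite sides of the line $KL$, so $Q^\circ$ cannot be convex; it then invokes Theorem~4.1 together with Corollary~2.1 to rule out self-intersection, leaving non-convexity. Corollary~5.1 is obtained afterwards, as a consequence. You invert the order: you first prove Corollary~5.1 by a connectedness argument on $\mathcal{Q}_c$ anchored at the square, and then deduce Theorem~5.1 by complementation, using that $\delta$ is a bijection fixing $\mathcal{Q}_s$ and $\mathcal{Q}_c$ setwise.

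What each approach buys: yours is more conceptual and sidesteps the coordinate sign-chasing on (2); it also makes the symmetry between Theorem~5.1 and Corollary~5.1 transparent. The paper's route needs no topology on the space of quadrangles and gives the sharper geometric conclusion that $M$ and $N$ are separated by $KL$. Two small remarks: the deformation-and-continuity reasoning you rely on is exactly the machinery already used in the paper's proof of Theorem~4.1, so your argument is very much in that spirit; and your base-case computation can be replaced by a forward reference to Theorem~8.1 (parallelograms are self-dual), which already covers the square.
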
 \begin{proof} Let $Q$ be a
quadrangle in Figure 2. As $z_2$ belongs to the upper half-plane,
then $a_2>0$, thus $g_2>0$. As $a_3b_4-a_4b_3>0$, because of the
counter clockwise turn from $z_3$ to $z_4$, then $h_2<0$, i.e.
$(g_2+i\,h_2)^2$ belongs to the lower half-plane. Thus, the vertex
$M$ lies in the lower half-plane. \pmn As $z_4$ belongs to the
lower half-plane, then $a_4<0$, $b_4>0$, thus $g_4<0$. As
$a_2b_3-a_3b_2<0$, because of the clockwise turn from $z_2$ to
$z_3$, then $h_4>0$, i.e. $(g_4+i\,h_4)^2$ belongs to the lower
half-plane. Thus, the vertex $N$ lies in the upper half-plane,
i.e. vertices $M$ and $N$ lie in different half-planes with
respect to edge $KL$. Hence, the quadrangle $KLMN$ cannot be
convex. But by Theorem 4.1. it cannot be self-intersecting, so it
is non-convex.
\end{proof}

\begin{cor} The dual to a convex quadrangle is also convex. \end{cor}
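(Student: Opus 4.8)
The plan is to avoid any direct computation and instead deduce the statement from the results already in hand, by a short elimination argument. Every non-degenerate plane quadrangle belongs to exactly one of three mutually exclusive classes: convex, non-convex (simple but not convex), or self-intersecting. So, starting from a convex quadrangle $Q$, I would first observe that its dual $Q^\circ$ is again non-degenerate, hence lies in one of these three classes, and then rule out the last two.

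To exclude self-intersection: if $Q^\circ$ were self-intersecting, then by Theorem 4.1 the quadrangle $(Q^\circ)^\circ$ would be self-intersecting as well. To exclude non-convexity: if $Q^\circ$ were non-convex, then by Theorem 5.1 the quadrangle $(Q^\circ)^\circ$ would be non-convex. In either case this contradicts Corollary 2.1, which identifies $(Q^\circ)^\circ$ with $Q$ up to a rotation and a reflection --- transformations that obviously preserve the class --- since $Q$ is convex. The only remaining possibility is that $Q^\circ$ is convex, which is exactly the assertion.

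An alternative, self-contained route would mimic the proofs of Theorems 4.1 and 5.1: place $Q$ as in Figure 1, read off the signs of the entries of $\bar g$ and $\bar h$ from formula (1) using the counterclockwise turns between successive $z_k$, and determine the half-plane positions of the vertices $K, L, M, N$ of $Q^\circ$. Here one would have to check that $M$ and $N$ lie on the same side of the line $KL$ and, symmetrically (after changing the marked directed edge, which by Theorem 2.2 does not affect $Q^\circ$), that $K$ and $L$ lie on the same side of $MN$; together with simplicity (already guaranteed once self-intersection is excluded via Theorem 4.1) this yields convexity.

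The step I expect to require the most care is the innocuous-looking non-degeneracy claim: one must be certain that duality carries non-degenerate quadrangles to non-degenerate ones, so that the trichotomy is genuinely available for $Q^\circ$; once this is secured, the elimination argument is immediate. If instead one follows the computational route, the subtlety shifts to verifying two independent sign conditions (on both pairs of opposite vertices) rather than the single condition that sufficed to rule out convexity in the proof of Theorem 5.1.
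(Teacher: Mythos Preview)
Your primary elimination argument---using Theorems 4.1 and 5.1 together with Corollary 2.1 to rule out the self-intersecting and non-convex cases for $Q^\circ$---is exactly the intended proof: the paper states the corollary immediately after Theorem 5.1 with no separate argument, relying on precisely this trichotomy-plus-involution reasoning. Your caution about non-degeneracy of $Q^\circ$ is well placed (the paper is silent on it), but otherwise you have reproduced the paper's approach.
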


\section{Edges}
\pn \begin{theor} Let $Q=ABCD$ be a convex quadrangle and
$Q^\circ=KLMN$ be its dual. Then $|AB|+|KL|=1$, $|BC|+|LM|=1$,
$|CD|+|MN|=1$ and $|DA|+|NK|=1$. \end{theor}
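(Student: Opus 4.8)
The plan is to read off the four edge lengths of $Q^\circ$ directly from the orthogonal base $(\bar g,\bar h)$ of $\Pi^\bot$ built in Section 3. Since $|g|=|h|=\sqrt{1-s_1}$ and $\bar g\perp\bar h$, the vectors $\bar c=\bar g/\sqrt{1-s_1}$ and $\bar d=\bar h/\sqrt{1-s_1}$ form an orthonormal base of $\Pi^\bot$, so by the definition of duality the $k$-th edge of $Q^\circ$ is the complex number $(c_k+i\,d_k)^2=(g_k+i\,h_k)^2/(1-s_1)$ and hence has length $(g_k^2+h_k^2)/(1-s_1)$. Thus the theorem reduces to the identities
$$g_1^2+h_1^2=(1-s_1)^2,\qquad g_k^2+h_k^2=(1-s_1)(1-s_k)\quad(k=2,3,4),$$
which give $|KL|=1-s_1=1-|AB|$, $|LM|=1-s_2=1-|BC|$, $|MN|=1-s_3=1-|CD|$ and $|NK|=1-s_4=1-|DA|$.

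The case $k=1$ is immediate from the quaternion construction itself and does not even use convexity: there $v$ is the imaginary part of $a$, so the scalar part of $g=a\cdot v$ is $-|v|^2=-(1-a_1^2)=s_1-1$, while the scalar part of $h=b\cdot v$ is $-(a_2b_2+a_3b_3+a_4b_4)$, which vanishes because $\bar a\perp\bar b$ and $b_1=0$. Hence $g_1=s_1-1$, $h_1=0$, and $g_1^2+h_1^2=(1-s_1)^2$.

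For $k=2$ and $k=4$ I would compute directly from formulas (1). Writing $\gamma_k=\beta_k/2$, formula (1) gives $g_2^2+h_2^2=s_1s_2\sin^2\gamma_2+s_3s_4\cos^2\gamma_4$. Applying the law of cosines in the two triangles into which the diagonal $AC$ splits $Q$, together with $\cos\beta_2=1-2\sin^2\gamma_2$ and $\cos\beta_4=2\cos^2\gamma_4-1$, yields
$$4s_1s_2\sin^2\gamma_2=|AC|^2-(s_1-s_2)^2,\qquad 4s_3s_4\cos^2\gamma_4=(s_3+s_4)^2-|AC|^2;$$
adding these, the $|AC|^2$ cancels and the relation $s_1+s_2+s_3+s_4=2$ factors the right-hand side as $(2-2s_1)(2-2s_2)$, so $g_2^2+h_2^2=(1-s_1)(1-s_2)$. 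The identical argument with the diagonal $BD$ gives $g_4^2+h_4^2=(1-s_1)(1-s_4)$.

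The case $k=3$ is the one I expect to be the real obstacle if attacked head-on, because from (1) it involves $\cos^2(\gamma_2+\gamma_3)$ and $\sin^2(\gamma_1+\gamma_2)$, i.e. sums of half-angles, for which no single diagonal of $Q$ is available; a direct treatment would in addition require the angle-sum relation $\beta_1+\beta_2+\beta_3+\beta_4=2\pi$. I would instead bypass this — and in fact recover $k=2,3,4$ uniformly — by invoking Theorem 2.2: the dual of $Q$ is unchanged under taking $B$, $C$ or $D$ as the first vertex, and, as the proof of Theorem 2.2 records, this merely relabels $KLMN$ cyclically into $LMNK$, $MNKL$, $NKLM$. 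Applying the $k=1$ computation to each relabeled copy of $Q$ — whose first edges are $BC$, $CD$, $DA$ of lengths $s_2,s_3,s_4$ — shows that the edges $LM$, $MN$, $NK$ of $Q^\circ$, which play the role of the "first edge" in those relabelings, have lengths $1-s_2$, $1-s_3$, $1-s_4$. The only point needing care along this route is keeping track of which edge of $Q^\circ$ becomes its first edge after each relabeling, which is exactly the bookkeeping carried out in the proof of Theorem 2.2.
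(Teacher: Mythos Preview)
Your argument is correct, and for $k=1,2,4$ it coincides with the paper's proof almost verbatim: the paper also reads off $|KL|=1-s_1$ from $g_1=s_1-1$, $h_1=0$, then computes $g_2^2+h_2^2$ via the law of cosines in the two triangles cut off by $AC$, and remarks that the same reasoning (with $BD$) handles $|NK|$. The only genuine divergence is at $k=3$. You anticipate trouble with the sums of half-angles and resolve it by invoking Theorem~2.2 to recycle the $k=1$ result under cyclic relabeling; the paper instead sidesteps the computation entirely with the one-line observation that both $Q$ and $Q^\circ$ have perimeter~$2$, so
\[
|MN|=2-|KL|-|LM|-|NK|=2-(1-s_1)-(1-s_2)-(1-s_4)=1-s_3.
\]
Your route via Theorem~2.2 is valid and has the conceptual merit of treating all four edges uniformly, but the paper's perimeter trick is shorter and avoids the relabeling bookkeeping you flag at the end.
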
 \begin{proof} As
$|g|=|h|=\sqrt{1-s_1}$, then have to prove that
$|(g_2+i\,h_2)^2|=g_2^2+h_2^2=(1-s_1)(1-s_2)$. Let $|AC|=l$, then
$$\begin{array}{l}
g_2^2+h_2^2=s_1s_2\sin^2(\gamma_2)+ s_3s_4\cos^2(\gamma_4)=\\
\hspace{5mm}[s_1s_2-s_1s_2\cos(\beta_2)
+s_3s_4+s_3s_4\cos(\beta_4)]/2=\\ \hspace{10mm}
\left[s_1s_2+(l^2-s_1^2-s_2^2)/2)+s_3s_4-(l^2-s_3^2-s_4^2)/2\right]/2=\\
\hspace{15mm}[(s_3+s_4)^2-(s_1-s_2)^2]/4=
[(s_3+s_4+s_1-s_2)(s_3+s_4-s_1+s_2)]=\\ \hspace{40mm}
=(1-s_2)(1-s_1).\end{array}$$ The same reasoning proves that
$|NK|=1-s_4$. As perimeters of $Q$ and $Q^\circ$ are 2, then
$|MN|=1-s_3$. \end{proof}
\begin{rem} The same reasoning proves the theorem for non-convex and
self-intersecting quadrangles. \end{rem}

\section{Diagonals}
\pn \begin{theor} Let $Q=ABCD$ be a convex quadrangle and
$Q^\circ=KLMN$ be its dual, then $|AC|=|KM|$ and $|BD|=|LN|$, i.e.
the duality preserves lengths of diagonals.\end{theor}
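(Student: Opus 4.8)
The plan is to obtain a closed formula for each squared diagonal of $Q$ and of $Q^\circ$ and then to identify the two by the law of cosines. First I would reduce to the single equality $|AC|=|KM|$: by Theorem 2.2 the dual of $Q$ does not change if we take $B$ as the first vertex, i.e. if we replace $ABCD$ by $BCDA$, and under this relabelling $Q^\circ=KLMN$ becomes $LMNK$ (exactly as in the proof of Theorem 2.2). Hence $|BD|=|LN|$ is nothing but the equality $|AC|=|KM|$ applied to the convex quadrangle $BCDA$ and its dual $LMNK$, and it is enough to prove $|AC|=|KM|$.

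The elementary tool is the identity
$$|\zeta_1^2+\zeta_2^2|^2=\bigl(|\zeta_1|^2+|\zeta_2|^2\bigr)^2-4(p_1q_2-p_2q_1)^2,\qquad \zeta_k=p_k+i\,q_k,$$
which follows from $\zeta_1^2+\zeta_2^2=(p_1^2+p_2^2-q_1^2-q_2^2)+2i(p_1q_1+p_2q_2)$ and the Lagrange identity $(p_1^2+p_2^2)(q_1^2+q_2^2)-(p_1q_1+p_2q_2)^2=(p_1q_2-p_2q_1)^2$. Applying it with $\zeta_k=u_k=a_k+i\,b_k$, and using $\overline{AC}=z_1+z_2=u_1^2+u_2^2$ together with $|u_k|^2=|z_k|=s_k$, gives
$$|AC|^2=(s_1+s_2)^2-4e_{12}^2,\qquad e_{ij}:=a_ib_j-a_jb_i.$$

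For $Q^\circ$ I would normalise the base of $\Pi^\bot$ as $\bar c=\bar g/\sqrt{1-s_1}$, $\bar d=\bar h/\sqrt{1-s_1}$ (legitimate since $\bar g\perp\bar h$ and $|\bar g|=|\bar h|=\sqrt{1-s_1}$), so that the edge vectors of $Q^\circ=KLMN$ are $(c_1+i\,d_1)^2,\dots,(c_4+i\,d_4)^2$ and hence $\overline{KM}=(c_1+i\,d_1)^2+(c_2+i\,d_2)^2$. The same identity, combined with Theorem 6.1 (which gives $|KL|=1-s_1$, $|LM|=1-s_2$, so $(|KL|+|LM|)^2=(2-s_1-s_2)^2=(s_3+s_4)^2$), yields
$$|KM|^2=(s_3+s_4)^2-4(e^\bot_{12})^2,\qquad e^\bot_{12}:=c_1d_2-c_2d_1.$$
Now the first coordinate of $\bar h$ in $(1)$ vanishes and $g_1=s_1-1$, so $e^\bot_{12}=g_1h_2/(1-s_1)=-h_2=\sqrt{s_3s_4}\cos(\gamma_4)$, whence $4(e^\bot_{12})^2=2s_3s_4(1+\cos(\beta_4))$ and
$$|KM|^2=(s_3+s_4)^2-2s_3s_4(1+\cos(\beta_4))=s_3^2+s_4^2-2s_3s_4\cos(\beta_4).$$
The right-hand side is exactly the law of cosines for the diagonal $AC$ in the triangle $ACD$ ($|CD|=s_3$, $|DA|=s_4$, $\angle CDA=\beta_4$), so $|KM|=|AC|$, which completes the proof.

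The only delicate point is the bookkeeping: keeping track of the factor $\sqrt{1-s_1}$ when passing from the orthogonal base $(\bar g,\bar h)$ to the orthonormal base $(\bar c,\bar d)$, and checking that $(c_k+i\,d_k)^2$ really is the edge sequence of the quadrangle $Q^\circ$ appearing in Theorem 6.1. Nothing else is needed in the convex case, and the non-convex and self-intersecting cases are handled by the identical computation with $(2)$ or $(3)$ replacing $(1)$, as in Remark 6.1. One may also avoid the relabelling step altogether by observing that the Plücker $2$-vector of $\Pi^\bot$ is the Hodge dual of that of $\Pi$, so $(e^\bot_{ij})^2=e_{kl}^2$ for complementary index pairs; then $|LN|^2=(s_1+s_4)^2-4e_{14}^2$ and, since $e_{14}=\sqrt{s_1s_4}\cos(\gamma_1)$ from $(1)$, $|LN|^2=s_1^2+s_4^2-2s_1s_4\cos(\beta_1)=|BD|^2$ by the law of cosines in triangle $ABD$.
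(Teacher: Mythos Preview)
Your main argument is correct and is genuinely different from the paper's. The paper proves $|KM|=|AC|$ by a direct expansion: it computes ${\rm Re}\bigl((g_2+ih_2)^2\bigr)$ and ${\rm Im}\bigl((g_2+ih_2)^2\bigr)^2$ separately, eliminates both $\cos\beta_2$ and $\cos\beta_4$ via the law of cosines in the two triangles $ABC$ and $ACD$, and then simplifies the resulting polynomial in $s_1,\dots,s_4,l$ to $l^2(1-s_1)^2$. Your route is shorter and more structured: the identity $|\zeta_1^2+\zeta_2^2|^2=(|\zeta_1|^2+|\zeta_2|^2)^2-4(p_1q_2-p_2q_1)^2$ isolates exactly the one nontrivial quantity $e^\bot_{12}$, you read its value off from the explicit $\bar h$ in $(1)$ (using $h_1=0$), and a single application of the law of cosines in $ACD$ finishes. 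Invoking Theorem~6.1 for $|KL|+|LM|=s_3+s_4$ is legitimate since that theorem precedes this one, and your reduction of $|BD|=|LN|$ to the $|AC|=|KM|$ case via the relabelling in Theorem~2.2 is cleaner than redoing the computation, which is what the paper does.

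One slip in your final parenthetical alternative: the formula $|LN|^2=(s_1+s_4)^2-4e_{14}^2$ is actually the formula for $|BD|^2$, not for $|LN|^2$. Applying your identity on the dual side gives
\[
|LN|^2=\bigl((1-s_1)+(1-s_4)\bigr)^2-4(e^\bot_{14})^2=(s_2+s_3)^2-4e_{23}^2,
\]
using the Hodge relation $(e^\bot_{14})^2=e_{23}^2$. To conclude $|LN|=|BD|$ along these lines you should therefore compute $e_{23}$ from $(1)$ (or equivalently note that $|BD|=|z_2+z_3|$ gives $|BD|^2=(s_2+s_3)^2-4e_{23}^2$ directly). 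As written, that paragraph computes $|BD|^2$ twice and calls one of the copies $|LN|^2$. This does not affect your main proof, which stands on the relabelling argument.
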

\begin{proof} Let $l=|AC|=|z_1+z_2|=|z_3+z_4|$. We will prove,
that $|g_1^2+(g_2+i\,h_2)^2|=(1-s_1)l$. At first we will find the
real part of the complex number $(g_2+i\,h_2)^2$:
\begin{multline*}\text{Re}(g_2+i\,h_2)^2=g_2^2-h_2^2=
s_1s_2\sin^2(\gamma_2)-s_3s_4\cos^2(\gamma_4)=\\
[s_1s_2(1-\cos(\beta_2)-s_3s_4(1+\cos(\beta_4)]/2=\\=
[2s_1s_2+l^2-s_1^2-s_2^2-2s_3s_4+l^2-s_3^2-s_4^2]/4=\\
=[2l^2-(s_1-s_2)^2-(s_3+s_4)^2]/4.\end{multline*} Now the real
part of $g_1^2+(g_2+i\,h_2)^2$ is
\begin{gather*}(1-s_1)^2+
[2l^2-(s_1-s_2)^2-(s_3+s_4)^2]/4=\\=
[4(1-s_1)^2+2l^2-(s_1-s_2)^2-(s_3+s_4)^2]/4=\\=
[2(1-s_1)^2+2l^2+(1-2s_1+s_2)(1-s_2)+\\+(1-s_1+s_3+s_4)(1-s_1-s_3-s_4)]/4=\\
=[2(1-s_1)^2+2l^2+(1-2s_1+s_2)(1-s_2)+(s_2-1)(3-2s_1-s_2)]/4=\\
=[2(1-s_1)^2+2l^2-2(1-s_2)^2]/4=[l^2-(s_1-s_2)(s_3+s_4)]/2.\end{gather*}
Now we will find the square of the imaginary part of
$g_1^2+(g_2+i\,h_2)^2$:
\begin{multline*}4s_1s_2s_3s_4\sin^2(\gamma_2)\cos^2(\gamma_4)=\\=
s_1s_2(1-\cos(\beta_2)s_3s_4(1+\cos(\beta_4))=\\=
(2s_1s_2+l^2-s_1^2-s_2^2)(2s_3s_4+s_3^2+s_4^2-l^2)/4=\\=
(l^2-(s_1-s_2)^2)((s_3+s_4)^2-l^2)/4.\end{multline*} At last we
can find $|g_1^2+(g_2+i\,h_2)^2|^2$:
\begin{multline*} [(l^2-(s_1-s_2)(s_3+s_4))^2+
(l^2-(s_1-s_2)^2)((s_3+s_4)^2-l^2)]/4=\\=
[l^2(-2(s_1-s_2)(s_3+s_4)+(s_3+s_4)^2+(s_1-s_2)^2)]/4=\\=l^2
[(s_3+s_4-s_1+s_2)^2]/4=l^2(1-s_1)^2.\end{multline*} Analogously,
we can prove that $|g_1^2+(g_4+i\,h_4)^2|=(1-s_1)\cdot |BD|$.
\end{proof} \begin{rem} The statement of this theorem is also valid for
non-convex and self-intersecting quadrangles. The reasoning is the
same.
\end{rem}

\section{Special cases}
\begin{theor} The dual to a trapezoid is a trapezoid. \end{theor}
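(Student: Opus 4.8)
The plan is to translate ``$Q$ is a trapezoid'' into an algebraic statement about the plane $\Pi$, and then read off the corresponding statement for $\Pi^\bot$. By Theorem 2.2 the dual is insensitive to the choice of first vertex, so I may assume the parallel pair of opposite edges of $Q=ABCD$ is $\{AB,CD\}$, i.e.\ $z_1\parallel z_3$. Writing $u_k=a_k+ib_k$ with $u_k^2=z_k$, and letting $M$ be the $2\times4$ matrix with rows $\bar a,\bar b$, one has $z_3\overline{z_1}=(u_3\overline{u_1})^2=(p+iq)^2$ with $p=a_1a_3+b_1b_3$ and $q=a_1b_3-a_3b_1$. Thus $z_1\parallel z_3$ is equivalent to $\operatorname{Im}(z_3\overline{z_1})=2pq=0$, i.e.\ to $p=0$ \emph{or} $q=0$. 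The point is that both quantities have clean meaning for $\Pi$: since $\bar a,\bar b$ are orthonormal, $MM^\top=I$ and $M^\top M=P_\Pi$ is the orthogonal projector onto $\Pi$, so $p=(P_\Pi)_{13}$; and $q$ is the determinant of columns $1,3$ of $M$, i.e.\ the $(1,3)$ Plücker coordinate of $\Pi$.

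Next I would transport each alternative to $\Pi^\bot$, using an orthonormal base $(\bar c,\bar d)$ with matrix $N$ and writing $w_k=(c_k+id_k)^2$ for the edges of $Q^\circ=KLMN$. Since $P_{\Pi^\bot}=N^\top N=I-P_\Pi$, the $(1,3)$ entry gives $c_1c_3+d_1d_3=-p$; so if $p=0$ then $(c_1+id_1)/(c_3+id_3)$ is purely imaginary, hence $w_1/w_3$ is real, which means $KL\parallel MN$. If instead $q=0$, then columns $1$ and $3$ of $M$ are linearly dependent, so $\Pi^\bot=\ker M$ contains a nonzero vector supported on coordinates $1$ and $3$; choosing $\bar c$ proportional to it forces $c_2=c_4=0$, whence $w_2=-d_2^2$ and $w_4=-d_4^2$ are both real and $LM\parallel NK$. (Equivalently, by Hodge/Plücker duality $q$ equals, up to sign, the complementary coordinate $p^\circ_{24}$ of $\Pi^\bot$, so $q=0$ is precisely the condition that edges $2,4$ of $Q^\circ$ be parallel.) In either case $Q^\circ$ has a pair of parallel opposite edges, so it is a trapezoid; and since being a trapezoid is invariant under rotation and reflection, the ambiguity in the base of $\Pi^\bot$ (Theorem 2.1) is harmless.

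The step needing the most care is bookkeeping of the two alternatives, and in particular identifying which pair of edges of $Q^\circ$ ends up parallel. For a \emph{convex} trapezoid the relation $\beta_2+\beta_3=\pi$ and the expressions for $\bar a,\bar b$ preceding (1) give $p=-\sqrt{s_1s_3}\cos(\gamma_2+\gamma_3)=0$ and $q=\sqrt{s_1s_3}\neq0$; so only the first alternative occurs and $Q^\circ$ again has $KL\parallel MN$ (and is again a \emph{proper} trapezoid, since $z_2\not\parallel z_4$ forces $(P_\Pi)_{24}\ne0$ and $p_{24}\ne0$, hence $w_2\not\parallel w_4$ by the same duality). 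The ``crossed'' self-intersecting trapezoid, where $\overline{AB}$ and $\overline{CD}$ point the same way, realizes $q=0$ instead, and its dual has the \emph{other} pair of edges parallel; the non-convex case splits similarly according to whether $\beta_3-\beta_2$ equals $0$ or $\pi$. I expect this case-splitting (together with the routine check that the chosen base keeps $Q^\circ$ non-degenerate, which holds because every edge of a non-degenerate quadrangle of perimeter $2$ has length $<1$) to be the only real work; everything else is the few lines of linear algebra above. A purely computational alternative, in the style of Sections~6--7, would instead substitute $\beta_2+\beta_3=\pi=\beta_1+\beta_4$ (and the corresponding relations for the other two figures) into formulas (1)--(3) and compare $\arg\bigl((g_1+ih_1)^2\bigr)$ with $\arg\bigl((g_3+ih_3)^2\bigr)$, the only obstacle being the half-angle trigonometry that the conceptual argument bypasses.
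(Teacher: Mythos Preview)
Your argument is correct, but it takes a genuinely different route from the paper's. The paper's proof is three lines long and leans entirely on the explicit quaternion basis $(\bar g,\bar h)$ of Section~3: position $Q$ so that $z_1>0$ is real and $AB\parallel CD$ forces $z_3<0$ real; then $u_3=\alpha i$ has $a_3=0$, so $g_3=a_1a_3=0$ and $(g_3+ih_3)^2=-h_3^2$ is a negative real number, hence the third edge of $Q^\circ$ is parallel to the first (which is $(s_1-1)^2>0$). That is all.

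Your approach replaces this concrete computation by the invariant identification $p=(P_\Pi)_{13}$, $q=p_{13}^{\text{Pl\"ucker}}(\Pi)$, and then transports via $P_{\Pi^\bot}=I-P_\Pi$ and Hodge duality. What you gain is a basis-independent statement that simultaneously covers the convex, non-convex, and crossed cases and tells you \emph{which} pair of edges of $Q^\circ$ is parallel in each (your $p=0$ branch gives $w_1\parallel w_3$, your $q=0$ branch gives $w_2\parallel w_4$); the paper, by contrast, treats only the convex picture of Figure~8 and relies on having already built the machinery of Section~3. What the paper gains is brevity: once $\bar g$ is known to have $k$-th coordinate $a_1a_k$, the whole theorem is the single observation $a_3=0\Rightarrow g_3=0$. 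Your final paragraph in fact anticipates exactly this ``computational alternative,'' and the paper's proof is precisely that alternative stripped to its minimum.
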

\begin{proof} Let $Q=ABCD$ be a trapezoid, where $AB\parallel CD$:
\[\begin{picture}(110,75) \put(0,15){\vector(1,0){110}}
\put(15,5){\vector(0,1){70}} \put(107,7){\scriptsize x}
\put(7,70){\scriptsize y} \put(15,15){\line(2,3){30}}
\put(45,60){\line(1,0){30}} \put(75,60){\line(1,-3){15}}
\put(7,7){\scriptsize A} \put(88,7){\scriptsize B}
\put(79,59){\scriptsize C} \put(37,59){\scriptsize D}
\end{picture}\]
\begin{center}{\large Figure 8}\end{center}
\pn Here $z_3$ is a negative real number, hence, $u_3=\alpha\,i$,
$\alpha>0$, hence, $g_3=0$, hence $(g_3+i\,h_3)^2$  is a negative
real number. \end{proof}
\begin{theor} The dual to a parallelogram is the same
parallelogram. \end{theor}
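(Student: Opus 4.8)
The plan is to deduce the statement from three results already established in the paper — the dual of a convex quadrangle is convex (Corollary 5.1), corresponding edges sum to $1$ (Theorem 6.1), and diagonals are preserved (Theorem 7.1) — together with two classical facts of plane geometry.

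First I record the data of a parallelogram $Q=ABCD$: opposite edges are equal and opposite, $\overline{CD}=-\overline{AB}$ and $\overline{DA}=-\overline{BC}$, so in the notation of Section 3 one has $s_3=s_1$ and $s_4=s_2$, and, since the perimeter is $2$, also $s_1+s_2=1$. A non-degenerate parallelogram is convex, so by Corollary 5.1 its dual $Q^\circ=KLMN$ is a convex quadrangle. Theorem 6.1 then gives $|KL|=1-s_1=s_2$, $|LM|=1-s_2=s_1$, $|MN|=1-s_3=s_2$, $|NK|=1-s_4=s_1$; hence $Q^\circ$ is a convex — in particular simple — quadrangle whose two pairs of opposite sides are equal, and therefore a parallelogram, by the classical fact that a simple quadrilateral with both pairs of opposite sides of equal length is a parallelogram.

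It remains to identify which parallelogram. By Theorem 7.1 the diagonals are preserved, $|KM|=|AC|$ and $|LN|=|BD|$. So $Q$ and $Q^\circ$ are both parallelograms with the same (unordered) pair of side lengths $\{s_1,s_2\}$ and the same pair of diagonal lengths, and a parallelogram is determined up to congruence by its side lengths and the lengths of its diagonals; hence $Q^\circ$ is congruent to $Q$ (concretely, matching the side sequence $s_2,s_1,s_2,s_1$ of $Q^\circ$, it is the parallelogram $BCDA$). Since by Theorem 2.2 the dual does not depend on the marked directed edge, and since a parallelogram — having a centre of symmetry — is congruent to its mirror image, the ambiguities "up to rotation and reflection" of Section 2 introduce nothing new. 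This is the assertion of the theorem.

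There is essentially no obstacle here; the only non-formal ingredients are the two elementary planar lemmas just used, plus the small verification that reflection does not yield a genuinely different parallelogram. For a reader who prefers to stay inside the computational framework of Section 3, a direct alternative works as well: substitute the parallelogram relations $s_3=s_1$, $s_4=s_2$, $\gamma_3=\gamma_1$, $\gamma_4=\gamma_2$, $\gamma_1+\gamma_2=\pi/2$, $s_1-1=-s_2$ into formula $(1)$. The vectors $\bar g$ and $\bar h$ then collapse — the third coordinate of $\bar g$ and the first coordinate of $\bar h$ vanish, and $|g|^2=|h|^2=s_2$ — and squaring the coordinates of $\bar g+i\,\bar h$ and dividing by $s_2$ yields the edge vectors of $Q^\circ$ explicitly as $s_2$, $-s_1 e^{i\beta_2}$, $-s_2$, $s_1 e^{i\beta_2}$. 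These satisfy $w_1=-w_3$ and $w_2=-w_4$, so $Q^\circ$ is a parallelogram; its side lengths are $s_1$ and $s_2$ and its angles are $\beta_1,\beta_2$, so it is congruent to $Q$.
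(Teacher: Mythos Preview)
Your proof is correct and follows essentially the same route as the paper's: use Theorem~6.1 to read off the side lengths of $Q^\circ$ and Theorem~7.1 to match a diagonal, concluding that $Q^\circ$ is congruent to $Q$. Your write-up is in fact more complete than the paper's three-line sketch --- you make explicit the appeal to Corollary~5.1 (convexity, hence simplicity) needed to deduce that equal opposite sides force a parallelogram, and you also supply a direct verification from formula~(1), neither of which the paper spells out.
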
 \begin{proof} Let $Q=ABCD$ be a
parallelogram. As $|AB|+|BC|=1$, then $|KL|=|BC|$ and $|LM|=|AB|$.
It remains to note that $|AC|=|KM|$. \end{proof}

\section{The geometric construction}
\pn Given a convex quadrangle $Q$ it is easy to construct the dual
$Q^\circ$, using ruler and compass. \pmn Let $Q=ABCD$ be a convex
quadrangle
\[\begin{picture}(105,105) \put(15,45){\line(1,-1){30}}
\put(15,45){\line(1,1){45}} \put(45,15){\line(3,2){45}}
\put(60,90){\line(2,-3){30}} \put(7,42){\scriptsize A}
\put(64,89){\scriptsize B} \put(93,42){\scriptsize C}
\put(49,9){\scriptsize D} \qbezier[40](15,45)(52,45)(90,45)
\end{picture}\]
\begin{center}{\large Figure 9}\end{center}
\pmn with diagonal $AC$. Let $|AB|=s_1$, $|BC|=s_2$, $|CD|=s_3$
and $|DA|=s_4$. Using compass we construct the point $B_1$: a) it
is in the same half-plane (with respect to $AC$) as point $B$; b)
$|B_1A|=(s_2+s_3+s_4-s_1)/2$; c) $|B_1C|=(s_1+s_3+s_4-s_2)/2$. In
the same way we construct the point $D_1$: a)  it is in the same
half-plane (with respect to $AC$) as point $D$; b)
$|D_1A|=(s_1+s_2+s_3-s_4)/2$; c) $|D_1C|=(s_1+s_2+s_4-s_3)/2$.
Then $AB_1CD_1$ will be the required dual.

\vspace{7mm}
\end{document}